\theoremstyle{plain}
\newtheorem{theorem}{Theorem}
\newtheorem{lemma}[theorem]{Lemma}
\theoremstyle{definition}
\title{The 5-Way Scale}
\author{Tanya Khovanova \and Joshua Lee}
\begin{document}

\maketitle

\begin{abstract}
In this paper, we discuss coin-weighing problems that use a 5-way scale which has five different possible outcomes: MUCH LESS, LESS, EQUAL, MORE, and MUCH MORE. The 5-way scale provides more information than the regular 3-way scale. We study the problem of finding two fake coins from a pile of identically looking coins in a minimal number of weighings using a 5-way scale. We discuss similarities and differences between the 5-way and 3-way scale. We introduce a strategy for a 5-way scale that can find both counterfeit coins among $2^k$ coins in $k+1$ weighings, which is better than any strategy for a 3-way scale.
\end{abstract}

\section{Introduction}

Coin problems have been a fascination for mathematicians for a long time \cite{GN}. In most of them there are many coins that look identical and a balance scale with two pans. Usually all real coins weigh the same and all counterfeit coins weigh the same, though lighter than real coins. The clasical balance scale used in past models has three different outcomes: LESS, EQUAL, and MORE. If the left pan is lighter, we denote the outcome as LESS. If the right pan is lighter, we denote the outcome as MORE. If the two pans are of equal weights, the outcome is EQUAL.

In this paper we assume that the scale is more refined; that is, it has five different outcomes: MUCH LESS, LESS, EQUAL, MORE, and MUCH MORE. The LESS and MORE outcomes correspond to the weight difference between one fake coin and one real coin, while MUCH LESS and MUCH MORE to a larger difference. 

In the most classical problem \cite{GN} it is known that one coin is fake and lighter than the rest and the goal is to find it in the smallest number of weighings. For the new scale this problem is not interesting, as MUCH MORE or MUCH LESS outcomes are never possible with the same number of coins on both pans. In other words, the 5-way scale doesn't give any advantage; it can only be used as a 3-way scale. 

In this paper we study the problem of finding two counterfeit coins among many identical-looking coins. We introduce the problem in Section~\ref{sec:pf}.

In Section~\ref{sec:itb} we find a natural information-theoretic bound. In Section~\ref{sec:eq}, we study the first weighing and consider the outcome that gives us the least information. 

In Section~\ref{sec:separated} we study a special case when the two counterfeit coins are separated into two different piles. In Section~\ref{sec:twopilesstrategy} we give a strategy using $k+1$ weighings for the case of separated coins when we have two piles of equal sizes $2^k$. This allows to create a strategy for our general case in Section~\ref{sec:strat} which is better than any strategy for a 3-way scale: it uses $k+1$ weighings for $2^k$ coins.

\section{Problem Formulation}\label{sec:pf}

We assume that the coins are identical and there are two types of coins: real and counterfeit. Real coins weigh the same. Counterfeit coins also weigh the same, but lighter than real ones.

We consider a new type of scale with five possible outcomes for each weighing: MUCH LESS, LESS, EQUAL, MORE, and MUCH MORE. We put the same number of coins on the pans. LESS means that the number of counterfeit coins on the left pan is one more than the number of counterfeit coins on the right pan. MUCH LESS means that the left pan has at least two more counterfeit coins than the right pan. MORE and MUCH MORE are similarly defined. EQUAL means that there are an equal number of counterfeit coins on both pans.

As this scale is more precise than the usual 3-way scale in the standard coin problems, the number of coins that we can process in a given number of weighings is at least the same. Indeed, if we treat MUCH LESS as LESS and MUCH MORE as MORE, then the 5-way scale becomes a 3-way scale.

The problem of finding one counterfeit coin is not interesting, as we never can get MUCH LESS or MUCH MORE. Therefore, we assume that we have at least 2 counterfeit coins, and at least 2 real coins. We note that the case of $n$ counterfeit and $k$ real coins is equivalent to the case of $k$ counterfeit and $n$ real coins.

In this paper we study the simplest case of two counterfeit coins:

\begin{quote}
There are $N >3$ identical-looking coins, two of them are counterfeit. What is the smallest number of weighings so that we are guaranteed to find both counterfeit coins on a 5-way scale?
\end{quote}

\section{The Information Theoretic Bound}\label{sec:itb}

The standard way to find a lower bound for the number of coins that can be processed in a given number of weighings is to count outcomes. Here is the argument for a 3-way scale.

Each weighing has 3 outcomes. That means, there are a total of $3^w$ possible outcomes after $w$ weighings. We need to find our two counterfeit coins after these weighings. What is the maximum number of coins we can process?

Suppose we found our two coins. That means one of $3^w$ possible outcomes led us to these pair of bad counterfeit coins pretending to be real. From this observation follows an information theoretical bound of $\binom{N}{2} \leq 3^w$, as each outcome must point to a different pair of coins. That means $N(N+1) \leq 2\cdot 3^w$, or 
\begin{equation}\label{eq:ITB3}
N \leq \dfrac{\sqrt{8\cdot 3^w + 1}-1}{2} \approx \sqrt{2\cdot 3^w}.
\end{equation}

Computational results show that the bound is very close to the actual number of how many coins we can process for a 3-way scale \cite{Knop, KP, Li}. Given that we can ignore the distinction between MUCH LESS and LESS, any strategy that works for a 3-way scale, also works for a 5-way scale. 

But our scale is spiffier. Does that mean we can move the bound up? A similar argument replacing 3 with 5 shows that we can process not more than
\begin{equation*}\label{eq:ITB5}
N \leq \dfrac{\sqrt{8\cdot 5^w + 1}-1}{2} \approx \sqrt{2\cdot 5^w}
\end{equation*}
coins.

This is a larger number than the bound for the 3-way scale. But is our new bound tight? For the rest of the paper our goal is to find a strategy for a 5-way scale that is better than any strategy for a 3-way scale.

Let us consider an example. Suppose we have five coins with two counterfeits among them. How many weighings do we need? There are 10 possibilities for pairs of counterfeit coins. Our bound proves that one weighing is not enough. Can we do it in two weighings?

First compare two coins against two coins. If if the outcome is MUCH LESS or MUCH MORE, then, hooray, we found the two counterfeit coins. This is the beauty of the 5-way scale. There is no way to find two counterfeit coins out of five in one weighing using a regular 3-way scale.

Can our luck continue? If the first weighing is LESS or MORE, then we know that one counterfeit coin is outside, and the other is on the lighter pan. Indeed, comparing the two coins on the lighter pan would lead us to our second counterfeit coin.

If the first outcome is EQUAL, then we know that one counterfeit coin is on each pan, and one real coin must be outside. Denote the coins on the left side as 1 and 2, the right side as 3 and 4, and the coin outside as 5. In the second weighing we compare 1 and 3 with 2 and 5: if it is MUCH LESS, then our counterfeit coins are 1 and 3; if it is LESS, they are 1 and 4; if it is EQUAL, they are 2 and 3; if it is MORE, they are 2 and 4. Therefore, our strategy allows us to find our two counterfeit coins from five using only two weighings!

The total number of  possibilities for two counterfeit coins among 5 coins is 10. In contrast with our 5-way scale, we can't find the two counterfeit coins in two weighings using a 3-way scale.

Now we want to find a strategy that is better than the bound in Eq.~(\ref{eq:ITB3}) for any number of coins. We start by analyzing the first weighing.

\section{The First Weighing}\label{sec:eq}

Consider the first weighing where we compare $k$ coins against $k$ coins. We calculate the information produced by each outcome, that is, the number of possible pairs of fake coins. By symmetry, we only need to consider three cases.

\begin{enumerate}
\item If it is MUCH LESS, then both fake coins are on the left pan, corresponding to $\binom{k}{2}$ different possibilities. 
\item If it is LESS, then one fake coin is on the left pan and the other one is not on the scale, corresponding to $k(N-2k)$ possibilities.
\item If it is EQUAL, then either each pan has one fake coin or both fake coins are not on the scale, corresponding to $k^2 + \binom{N-2k}{2}$ possibilities.
\end{enumerate}

The maximum out of the three expressions above is provided by the last one as shown in the next lemma.

\begin{lemma}
$\max(\binom{k}{2},k(N-2k),k^2 + \binom{N-2k}{2}) = k^2 + \binom{N-2k}{2}$.
\end{lemma}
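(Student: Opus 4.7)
The plan is to verify the two inequalities $C \ge A$ and $C \ge B$ separately, where $A = \binom{k}{2}$, $B = k(N-2k)$, and $C = k^2 + \binom{N-2k}{2}$. Both reduce to elementary algebra once I rewrite things in a convenient form.

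For $C \ge A$, I would simply expand: since $k^2 - \binom{k}{2} = k^2 - \tfrac{k(k-1)}{2} = \tfrac{k(k+1)}{2}$, I get
\[
C - A \;=\; \tfrac{k(k+1)}{2} + \binom{N-2k}{2},
\]
which is manifestly non-negative (with $k\ge 0$ and $N-2k\ge 0$). This part needs no case analysis.

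The main work is showing $C \ge B$. I would set $m = N - 2k$, so the inequality becomes $k^2 + \binom{m}{2} \ge km$, i.e., after multiplying by $2$,
\[
2k^2 - 2km + m^2 - m \;\ge\; 0.
\]
Viewing the left-hand side as a quadratic in $k$ with leading coefficient $2 > 0$, its discriminant is $(2m)^2 - 8(m^2 - m) = 4m(2-m)$, which is non-positive as soon as $m \ge 2$. Hence the quadratic is non-negative for every real $k$ when $m\ge 2$. The two remaining cases $m \in \{0,1\}$ are handled by inspection: $m=0$ gives $2k^2 \ge 0$, and $m=1$ gives $2k(k-1) \ge 0$ for integers $k \ge 0$. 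Equivalently, one can write $2k^2 - 2km + m^2 = k^2 + (k-m)^2$ and split into the cases $k \ge m$ and $k < m$, but the discriminant argument is cleaner.

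I expect no serious obstacle; the only mild subtlety is the near-equality at $(k,m)=(1,2)$ (i.e.\ $N=4$), which explains why the inequality is not strict and forces the slightly careful handling of small $m$. Once both comparisons are established, the lemma follows immediately.
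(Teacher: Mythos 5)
Your proof is correct, but it takes a genuinely different route from the paper's. Both arguments dispose of $\binom{k}{2}\le k^2+\binom{N-2k}{2}$ trivially; the difference is in the comparison $k(N-2k)\le k^2+\binom{N-2k}{2}$. The paper proves the equivalent inequality $N^2+10k^2-6Nk-N+2k\ge 0$ by exhibiting it as $\left(N-3k-\tfrac12\right)^2+\left(k-\tfrac12\right)^2-\tfrac12$ and invoking $\left(x-\tfrac12\right)^2\ge\tfrac14$ for integers $x$ --- a sum-of-squares identity that uses integrality of both $N-3k$ and $k$ simultaneously and settles everything in one stroke, at the cost of having to guess the right completion of squares. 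You instead substitute $m=N-2k$, view $2k^2-2km+m^2-m\ge 0$ as a quadratic in $k$, and observe that its discriminant $4m(2-m)$ is non-positive for $m\ge 2$ (and for $m\le 0$), so the inequality holds for all \emph{real} $k$ there; only the case $m=1$ requires integrality of $k$, via $2k(k-1)\ge 0$. Your approach is more mechanical to discover and has the virtue of isolating exactly where integrality is indispensable (for $m=1$ and $k=\tfrac12$ the quadratic is negative, so some such hypothesis is genuinely needed), and your remark about the equality case $(k,m)=(1,2)$, i.e.\ $N=4$, correctly explains why the bound cannot be strict. The paper's identity is shorter once found but more opaque. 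Both proofs are complete and elementary; there is no gap in yours.
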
 

\begin{proof}
The first value is not more than the third as   $\binom{k}{2} \leq k^2 + \binom{N-2k}{2}$. Now we want to show that the second value is not more than the third.

Both $N$ and $k$ are integers, and $\dfrac{1}{4} \leq \left(x-\dfrac{1}{2}\right)^2$ for any integer $x$, so 
\[\dfrac{1}{2} \leq \left(N-3k-\dfrac{1}{2}\right)^2 + \left(k-\dfrac{1}{2}\right)^2 \implies\]
\[0 \leq N^2 + 10k^2 - 6Nk - N + 2k \implies\]
\[2Nk - 6k^2 \leq N^2-4Nk+4k^2-N+2k \implies\]
\[Nk - 2k^2 \leq k^2 + \dfrac{(N-2k)(N-2k-1)}{2},\] as desired.
\end{proof}

The EQUAL outcome is the one that gives us the least information. But how much info?

Now we want to find the minimum possible value for Case~(3). 
We have that after the first weighing, if it is EQUAL, then the remaining possibilities are
\begin{equation}\label{eq:par}
k^2 + \dfrac{(N-2k)(N-2k-1)}{2},
\end{equation} where $k$ is the number of coins on each side of the scale. The next step is to make our life more complicated by assuming that $k$ is not an integer, but a real number. As a function of $k$, Eq.~(\ref{eq:par}) is a parabola and it reaches its minimum where the derivative, $6k - 2N + 1$, is zero. That is the minimum for  a real $k$ is at $k = \dfrac{N}{3}-\dfrac{1}{6}$. 

The real $k$ told us that we need to divide all the coins into approximately three equal parts. If $k \approx N/3$, then EQUAL leaves us with approximately $\dfrac{N^2-N}{6} = \dfrac{\binom{N}{2}}{3}$ bits of information. Though we have five different outcomes, with the first weighing we cannot divide the number of possibilities into five equal parts. In the best division, the largest pile of information is one third: We can't reduce the number of possibilities by more than a third.

This is unfortunate. In the worst case the 5-way scale behaves similar to a 3-way scale. If every weighing behaved like this then the final answer would be close to the bound in Eq.~(\ref{eq:ITB3}), and the 5-way scale wouldn't be much better than a 3-way scale.

Let's not despair yet and see how information is distributed if the first weighing has $N/3$ coins on each pan:

\begin{itemize}
    \item MUCH LESS or MUCH MORE: one pile of size $N/3$ with two counterfeit coins, about $N^2/18$ possibilities.
    \item LESS or MORE: two piles of size $N/3$ with one counterfeit coin each, $N^2/9$ possibilities.
    \item EQUAL: either one pile of size $N/3$ contains both coins, which corresponds to about $N^2/18$ possibilities or two other piles of size $N/3$ contain one fake coin each,  corresponding to $N^2/9$ possibilities. Summing up we get $N^2/6$ possibilities
\end{itemize}

We just discovered a  situation that requires our attention:

\begin{itemize}
    \item One fake coin in one pile and one fake coin in another pile.
\end{itemize}

We will call this case the case of \textit{separated coins} and sstudy it in the next section.

\section{Separated coins}\label{sec:separated}

Suppose one fake coin is in pile $P_1$ and the other in $P_2$, where $P_1$ and $P_2$ are disjoint. The two piles $P_1$ and $P_2$ are not assumed to have equal size. We denote the size of a pile $P$ as $|P|$.

We divide $P_1$ into three piles denoted $A_1$, $A_2$, and $A_3$, where $|A_1| = |A_2| = x|P_1|$, and $P_2$ into three piles 
$B_1$, $B_2$, and $B_3$, where $|B_1| = |B_2| = y|P_2|$. We are interested in how much we can reduce the number of possibilities $|P_1|\cdot|P_2|$ for the case two separated coins using one weighing.

In our weighing we compare $A_1B_1$ versus $A_2B_2$. From now on we write two or more piles together to indicate the union of those piles.

MUCH LESS means one fake coin is in pile $A_1$ and one in $B_1$, thus reducing the total number of possibilities by $xy$. 

LESS means that one coin is in $A_3$ and another in $B_1$, or one coin in $A_1$ and another in $B_3$. We have two disjoint cases, with possibilities reduced by $(1-2x)y$ and $x(1-2y)$ correspondingly.

EQUAL yields either one coin in $A_1$ and the other in $B_2$, or one coin in $A_2$ and the other in $B_1$, or one coin in $A_3$ and the other in $B_3$. We got three disjoint groups with reductions $xy$, $xy$ and $(1-2x)(1-2y)$.

Our $x$ and $y$ range between 0 and $\frac{1}{2}$. We want to find the minimum of 
\[\max\{xy,(1-2x)y+x(1-2y),2xy+(1-2x)(1-2y)\}.\]

We used a program to get us the answer we unfortunately expected: the minimum of $\frac{1}{3}$ is achieved for $x=y =\frac{1}{3}$. In the best case we can divide the number of possibilities by 3.

Again, the possibilities are not divided evenly and EQUAL is the worst case. 

If we had a 3-way scale, the strategy for the first weighing were the same. And in the worst case we would have divided our information by 3.

Are we doomed? Maybe we can't do better than a 3-way scale. Though we can divide the information into five groups, we can't divide it evenly. It seems that the worst case is the same as the worst case for the 3-way scale.

\section{Separated Coins Strategy}\label{sec:twopilesstrategy}

In this section we suggest a strategy for separated coins when two piles of coins are of the same size $N = 2^k$, which is a power of 2.

In this section we will encounter another case, which we call the \textit{doubly separated coins.} In this case we have four piles $A$, $B$, $C$, and $D$, all of size $2^k$. It is known that there is one coin in each of $A$ and $B$ or there is a coin in each of $C$ and $D$.

Let $f(k)$ be the maximum number of weighings in the strategy below to find both counterfeit coins in the separated case, where each pile has $2^k$ coins. Let $g(k)$ be the maximum number of weighings needed to find both counterfeit coins in the strategy below in the doubly separated case, where each pile has $2^k$ coins.

Let us calculate $f$ and $g$ for small $k$.

First, we observe that $f(0) = 0$: we do not need any weighings if the fake coins are separated into two piles of size 1 each. 

Next, let's calculate $f(1)$. 
We have four coins labeled 1, 2, 3, and 4, so that one fake coin is in the pile 12 and the other in the pile 34. We can find the fake coins in two weighings: first comparing 1 versus 2, then 3 versus 4. An exhaustive search shows that we can't find both coins in one weighings.
That means $f(1)=2$.

Next we want to calculate $g(0)$. In this case we have four coins total, and the two fake coins are either 1 and 2 or 3 and 4. In one weighing, comparing 12 versus 34, we can find the fake coins. That is, $g(0)=1$.

Now we are ready to show you a cool strategy for separated coins, where each of the piles is size $2^k$, where $k > 1$. We split the first pile containing one fake coin into four equal groups labeled as 1, 2, 3, and 4. We do the same with the second pile and labels 5, 6, 7, 8.

First weigh 12 vs 56. The result cannot be MUCH LESS or MUCH MORE.

\textbf{Case 1.} Suppose the first weighing shows LESS. This must mean that we are reduced to a problem where one fake coin is in 12, and the other is in 78. This is the case of separated coins with twice fewer coins. Thus, in this case we need  $f(k-1)+1$ weighings. The outcome MORE has similar results.

\textbf{Case 2.} If the scale shows EQUAL in the first weighing, then  either one coin is in 12 and the other is in 56, or that one coin is in 34 and the other is in 78. This corresponds to the case of doubly separated coins with $2^{k-1}$ coins in each pile: we need $g(k-1)+1$ weighings. 

This means 
\[f(k) = \max\{f(k-1)+1,g(k-1)+1\}.\]

Now consider the case of doubly separated coins, where each pile is of size $2^k$, and $k > 0$. We split each pile into two so that $A$ is 1 and 2, $B$ is 5 and 6, $C$ is 3 and 4, and $D$ is 7 and 8.
Now weigh 15 vs 37. 

\textbf{Case $1'$.} If the weighing is MUCH LESS, then one fake coin is in 1 and the other is in 5. We have a case of separated coins where each pile has $2^{k-1}$ coins: we need total of $f(k-1)+1$ weighings. Similarly, if the weighing is MUCH MORE, then one fake coin is in 3 and the other in 7.

\textbf{Case $2'$.} If the weighing is LESS, one coin is in 1 and one coin is in 6, or one coin is in 2 and the other is in 5. We have a case of doubly separated coins with piles half as large. Thus, we need $g(k-1)+1$ weighings.

\textbf{Case $3'$.} If the weighing is EQUAL, then 15 can't contain a fake coin. Therefore, either one coin is in 2 and the other is in 6, or one coin is in 4 and the other is in 8. Again, we have a case of doubly separated coins with piles half as large and need $g(k-1)+1$ weighings in total. It follows that 
\[g(k) = \max\{f(k-1)+1,g(k-1)+1\}.\]

By continuing this process, we either get to separated coins with each pile of size 2, or to doubly separated coins with each pile of size 1. Looking at the starting conditions of $f(1) = 2$ and $g(0) = 1$ and using induction, we get 
\[f(k) = k+1\quad \text{ and } \quad g(k) = k+1.\]

We produced a strategy that solves the separated coins case for the total of $2^w$ coins in $w$ weighings. This strategy is better than any strategy on a 3-way scale. 

Notice that the first weighing for separated coins is not very effective. Depending on the outputs LESS, MORE, or EQUAL, it divides information into 1/4, 1/4, and 1/2 respectively. But the worst case of EQUAL corresponds to the case of doubly separated coins. 

Let us see how the information is divided in the doubly separated case. We start with four piles $A$, $B$, $C$, and $D$ of size $2^k$. There is one fake coin in either $A$ and $B$, giving $2^{2k}$ possibilities, or $C$ and $D$, giving another $2^{2k}$ possibilities, the total being $2^{2k+1}$.

If the first weighing is MUCH LESS or MUCH MORE we get to the case of separated coins with pile sizes $2^{k-1}$, for a total of $2^{2k-2}$ possibilities each. Other weighing outcomes direct us back to doubly separated coins with sizes reduced by a factor of 2. That means they divide the information by 4. That is, five different outcomes divide the information into portions 1/8, 1/8, 1/4, 1/4, and 1/4. This is way better than a 3-way scale.

\section{The Strategy}\label{sec:strat}

Now that we have a strategy that works fast for separated coins, we want to extend it to the general case of 2 fake coins in a group of $2^k$ coins. Let $h(k)$ denote the maximum number of weighings necessary to find both coins in one pile of size $2^k$. We assume that $k > 1$.

Let us split the pile into four equal piles, labeling them 1, 2, 3, and 4. First weigh 1 versus 2.

\textbf{Case 1.} If the weighing is  LESS or MORE, we weigh 3 versus 4 to enter the scenario in Section~\ref{sec:twopilesstrategy} with four piles of size $2^{k-2}$ each. They can be processed in $k-1$ weighings. That means all given coins can be processed in $k+1$ weighings.

\textbf{Case 2.} 
If the weighing is MUCH LESS or MUCH MORE, our strategy allows us to find the coins in $h(k-2) +1$ weighings.

\textbf{Case 3.} 
If the first weighing is EQUAL, then either there is exactly one counterfeit coin in each of 1 and 2, or both coins are in 3 and 4. Next, we weigh 2 versus 3 to reduce the problem to the following cases
\begin{itemize}
    \item LESS---one counterfeit coin in each of 1 and 2. We need $f(k-2)+2$ weighings in total.
    \item EQUAL---both counterfeit coins are in 4. We need $h(k-2) +2$ weighings in total.
    \item MORE---one counterfeit coin in each of 3 and 4. We need $f(k-2)+2$ weighings in total.
    \item MUCH MORE---both counterfeit coins are in 3. We need $h(k-2) +2$ weighings in total.
\end{itemize}

That means after two weighings in Case 3, we either reduce the pile by a factor of 4, or split the half of the pile into two equal piles each containing one fake coin, which we know how to process by Section~\ref{sec:twopilesstrategy}. In particular, in the worst case we have 
\[h(k) = \max(f(k-2)+2,h(k-2)+2),\] where $f$ is defined in Section~\ref{sec:twopilesstrategy}. 

We can use an exhaustive search to see that $h(1) = 0$, and $h(2) = 2$. It follows that $h(3) = \max(f(1)+2,h(1)+2) = 4$. Similarly, $h(4) = \max(f(2)+2,h(2)+2) = 5$. Continuing by induction, we get that \[h(k) = k+1,\]
for $k > 2$.

Thus we can always solve a problem of finding two fake coins among $N = 2^w$ coins in $w+1$ weighings using a 5-way scale.

Let us see how the information is divided in the first weighing of our strategy. At the beginning we had $\binom{N}{2}$ or about  $N^2/2$ possibilities. If the first weighing is MUCH LESS or MUCH MORE the number of possibilities is divided by 16. If it is LESS or MORE, then one coin is in the pile of size $N/4$, and the other in the pile of size $N/2$. So the information is divided by about 4. So EQUAL is the worst case with 3/8 of the initial information. 

In the first weighing, we are not doing better than a 3-way scale. But later we get to the case of doubly separated coins, which allows us to become much more effective.

\section{Conclusion}

Working on this project was an emotional rollercoaster. We started with a small example of 5 coins that gave us hope that a 5-way scale was noticeably better than a 3-way scale. Then we started analyzing the first weighing in different situations and were extremely discouraged. It looked like a 5-way scale couldn't do better than a 3-way scale. We were so discouraged that for some time we were not looking into what the second weighing might do. That was a mistake, as later weighings make the strategy we found faster and better. Despite the fact that the 5-way scale can't divide the possibilities evenly, there is a strategy for a 5-way scale that is much better than any strategy with a 3-way scale. 

We gave our example of a fast strategy only for a specific number of coins $N = 2^k$. We leave it to the reader and future researchers to see if this strategy can be extended to any number of coins.

\section{Acknowledgments}

This project was part of the PRIMES-USA program. We are thankful to the program for allowing us the opportunity to conduct this research.


\begin{thebibliography}{9}

\bibitem{GN}
R.~K.~Guy and  R.~J.~Nowakowski, Coin-Weighing Problems, \textit{Amer. Math. Monthly}, v.102, n.2, pp.164--167 (1995).

\bibitem{Knop} K.~Knop, Two counterfeit coins, (In Russian) Kvant, 1, (2013), 49--54.

\bibitem{KP} K.~Knop, O.~Polubasov, Two counterfeit coins revisited, (In Russian) available at \url{http://www.kknop.com/math/ff.pdf}, (2015)

\bibitem{Li} A.~Li, On the conjecture at two counterfeit coins, Disc. Math., 133 (1994) 301--306.

\end{thebibliography}
\end{document}